\documentclass[conference]{IEEEtran}
\IEEEoverridecommandlockouts
\usepackage{cite}
\usepackage{amsmath,amssymb,amsfonts}
\usepackage{algorithm}
\usepackage[noend]{algpseudocode}
\algnewcommand{\Input}{\item[\textbf{Input:}]}
\algnewcommand{\Output}{\item[\textbf{Output:}]}
\usepackage{graphicx}
\usepackage{textcomp}
\usepackage{xcolor}

\definecolor{burgundy}{RGB}{128,0,32}
\definecolor{darkblue}{RGB}{0,0,139}
\definecolor{normalblue}{RGB}{0,90,180}
\usepackage[
    colorlinks=true,
    linkcolor=burgundy,   
    citecolor=darkblue,   
    urlcolor=black   
]{hyperref}
\usepackage[nameinlink,capitalize]{cleveref}
\usepackage{booktabs}
\usepackage[table]{xcolor}
\usepackage{xspace}
\usepackage{tabularx}

\newcommand{\lsr}{\langle}
\newcommand{\rsr}{\rangle}

\crefname{figure}{Figure}{Figures}
\crefname{rrule}{Reduction Rule}{Reduction Rules}

\DeclareMathOperator{\dist}{dist}
\DeclareMathOperator{\mlu}{mlu}

\newcommand{\minsr}{\textsc{Min Segment Routing}\xspace}
\newcommand{\srchallengeLong}{\textsc{Adaptive Segment Routing}}
\newcommand{\srchallenge}{\textsc{ASR}\xspace}

\usepackage{tikz}
\usepackage{tkz-berge}
\usetikzlibrary{shapes, snakes}
\usetikzlibrary{backgrounds}
\usetikzlibrary{positioning,calc,shadows,arrows.meta}

\usepackage{amsthm,float}
\usepackage{thmtools}  
\usepackage{mdframed}

\declaretheorem[name=Theorem,numberwithin=section]{theorem}
\declaretheorem[name=Proposition,sibling=theorem]{proposition}

\declaretheorem[name=Lemma,sibling=theorem]{lemma}

\declaretheorem[name=Remark,sibling=theorem,style=remark]{remark}

\newmdenv[linecolor=black,linewidth=0.8pt,roundcorner=3pt,
          innertopmargin=8pt,innerbottommargin=8pt,
          innerleftmargin=10pt,innerrightmargin=10pt]{defbox}

\newmdenv[linecolor=black,linewidth=0.8pt,roundcorner=3pt,
          innertopmargin=8pt,innerbottommargin=8pt,
          innerleftmargin=10pt,innerrightmargin=10pt]{thmbox}

\newmdenv[linecolor=black,linewidth=0.8pt,roundcorner=3pt,
          innertopmargin=8pt,innerbottommargin=8pt,
          innerleftmargin=10pt,innerrightmargin=10pt]{lembox}

\newcommand{\maxSeg}{\texttt{maxSeg}}

\newcommand{\calA}{\mathcal{A}}

\definecolor{dauphinegrey}{RGB}{65, 60, 65}
\tikzstyle{mybox} = [draw=dauphinegrey, fill=white, drop shadow, very thick, rectangle, rounded corners, inner sep=10pt, inner ysep=12pt]
\tikzstyle{fancytitle} =[fill=white, text=dauphinegrey]

\newcommand{\pboptdef}[3]{
\begin{center}
\begin{tikzpicture}
\node [mybox] (box){%
\begin{minipage}{0.44\textwidth}
    \textbf{Input: }{#2}
    \\
    \textbf{Output: }{#3}
\end{minipage}
};
\node[fancytitle, right=10pt] at (box.north west) {#1};
\end{tikzpicture}
\end{center}
}

\usepackage[colorinlistoftodos,prependcaption,textsize=scriptsize]{todonotes}

\definecolor{babypink}{rgb}{0.96,0.76,0.76}

\begin{document}

\title{On the T-Adaptive Segment Routing Problem}

\author{\IEEEauthorblockN{Amal Benhamiche}
\IEEEauthorblockA{\textit{Orange Research} \\
Chatillon, France \\
amal.benhamiche@orange.com}
\and
\IEEEauthorblockN{Kaoutar Bouaachra}
\IEEEauthorblockA{\textit{Ecole Polytechnique} \\
Palaiseau, France \\
kaoutar.bouaachra@polytechnique.edu}
\and
\IEEEauthorblockN{Yannick Carlinet}
\IEEEauthorblockA{\textit{Orange Research} \\
Chatillon, France \\
yannick.carlinet@orange.com}
\and
\IEEEauthorblockN{Morgan Chopin}
\IEEEauthorblockA{\textit{Orange Research} \\
Chatillon, France \\
morgan.chopin@orange.com}
\and
\IEEEauthorblockN{Eric Gourdin}
\IEEEauthorblockA{\textit{Orange Research} \\
Chatillon, France \\
eric.gourdin@orange.com}
\and
\IEEEauthorblockN{Nancy Perrot}
\IEEEauthorblockA{\textit{Orange Research} \\
Chatillon, France \\
nancy.perrot@orange.com}
}

\maketitle

\begin{abstract}
In this paper, we present a multi-period optimization problem arising from the traffic engineering of core IP/MPLS  networks, called $T$-\srchallengeLong. This problem aims at computing a sequence of segment routing paths that adapt to a multi-period scheduled maintenance, while allowing limited number of path reconfigurations between successive time steps.
Rather than focusing solely on minimizing the classical Maximum Link Utilization (MLU) criteria, we introduce a more refined lexicographic objective that minimizes the entire sorted vector of link loads providing a more efficient resource utilization for all the links.
We propose a generic approach that decomposes this problem into a sequence of subproblems. To model each subproblem, we introduce two Mixed Integer Linear Programming (MILP) formulations, ALEXA and STELA, resulting in two variants of our approach. Finally, we evaluate and compare the efficiency of both variants on small to medium-sized network instances.

\end{abstract}

\begin{IEEEkeywords}
Segment Routing,  Traffic Engineering, Integer Programming
\end{IEEEkeywords}

\section{Introduction} 
\label{s:intro}
With the arrival of next-generation networks enabled by disruptive technologies such as Software-Defined Networking (SDN) and Network Virtualization, network operators have to reconsider \textit{Traffic Engineering} (TE) challenges and conventional solutions through a different lens. These challenges include the adaptation of networks to accommodate the ever-growing volume of traffic and the multiplicity of user services and contents, all while maintaining high quality of service (QoS). The objective is to rely on TE to better tune network parameters so as to make a more efficient use of the existing infrastructure and resources in order to avoid congestion and hence to improve the QoS experienced by users of the multiple services supported by the network. A standard design rule requires the network to ensure service continuity regardeless of any link or node failure that might occur. The term failure should be understood broadly, covering cases where a link becomes unavailable either due to unforeseen events or deliberate (and scheduled) interventions by a network operator. Moreover, today’s geopolitical tensions and accelerating climate change increase the need for networks that are not only secure but also resilient to multiple router or link failures caused by natural disasters or acts of sabotage.

In this context, designing an algorithm that, given an IP/MPLS network and a traffic matrix (that is, a set of projected values based on current measured volumes of traffic between given origin/destination pairs), computes a routing scheme that optimizes network load, even in scenarios of network equipment unavailability, becomes a crucial task.

The proposed routing scheme relies on the so-called Segment Routing (SR) protocol, a network technology proposed by IETF~\cite{rfc8402} which recently attracted much attention in both telecommunication networking and Operations Research communities. Indeed, the packets in an IP/MPLS network are traditionally routed along the shortest paths from the origin to the destination according to a set of arc weights set by the network administrators. While this approach is easy to implement in practice, it comes with several limitations such as the complexity of finding a set of link weights that induce shortest-paths with the least possible network congestion while remaining robust against scenarios where multiple network components become unavailable.

Segment Routing was designed to alleviate this issue by enabling the possibility to route packets over non-shortest paths without extensive modifications to the network. More precisely, each packet entering the network is assigned to a so-called \emph{segment path}, which is a sequence of routers referred to as \emph{node segments} or \emph{waypoints} that the packet must visit, one after the other in the network before reaching its final destination. Between two waypoints, traditional shortest path-based routing is used. By encoding routing instructions directly into the packet header, SR allows packets to deviate from the single shortest paths from origin to destination, and hence allows for greater flexibility with minimal additional implementation costs for network administrators.

The design of sets of segment paths that account for multiple routers or links unavailability is a very challenging yet crucial optimization problem. Addressing this is critical to ensure \emph{stability} and enable deployment in modern, large-scale IP/MPLS networks. In real IP/MPLS networks, routing
changes cannot be applied arbitrarily from one period to the next: they require
validation, deployment, supervision, and sometimes manual intervention.
Moreover, scheduled maintenance operations must be anticipated without
degrading service quality. The challenge therefore stands not only in computing
"good enough" routing schemes for each network state, but in producing a sequence of
routing schemes that remains close enough to be operationally deployable.

This paper addresses the $T$-\srchallengeLong ($T$-\srchallenge) problem. This problem consists, given an IP/MPLS network, a set of traffic demands and multiple time-periods, each associated with scheduled intervention scenarios, in computing a sequence of segment routing paths under budget and length (in terms of number of segments) requirements. 

In the literature of TE and IP/MPLS network optimization, the commonly used objective function is the minimization of the Maximum Link Utilization (MLU). While this approach provides a partial optimization of network resources, it has limitations. Specifically, two routing schemes with identical MLU values may differ significantly in their residual capacities on other links, or even cause some links to become more heavily loaded while efforts are made to reduce the most congested link. Therefore, and to address these issues, our main contribution is the introduction of a lexicographic objective function based on the sorted vector of link loads across all time periods. This approach first minimizes the largest link load, then the second-largest, and so forth. Such a criterion offers a more refined optimization compared to simple MLU minimization. Unlike average loads or weighted sums, it requires no tuning parameters, avoids hiding local congestion behind many lightly loaded links, and directly aligns with operational priorities by focusing on eliminating the most critical bottlenecks first.

Ultimately, a solution to this problem yields a resilient or \textit{adaptive} segment routing scheme that adheres to budget and length constraints while lexicographically minimizing link loads.

The remainder of the paper is organized as follows. In Section \ref{s:sota}, we give an overview of the related works in the literature of TE and IP/MPLS network optimization. We give some definitions and notations in Section \ref{sec:notations} and we use them to introduce the $T$-\srchallenge problem more formally and model it as a MILP formulation in Section \ref{sec:prb def}. We describe our exact approaches in Section \ref{exact} and assess their efficiency through a set of preliminary results in Section \ref{sec:experiments}. Finally, some concluding remarks are given in Section \ref{sec:conclusion}.

\section{State of the Art}
\label{s:sota}
From the complexity point of view, the $T$-\srchallenge problem studied in this paper generalizes the \minsr problem~\cite{hartert2015solving}:
indeed, one can easily verify that \minsr is equivalent to~$\{0\}$-\srchallengeLong{}.
Consequently, the computational intractability results established for \minsr
carry over directly to $T$-\srchallenge{}.
In particular, this problem remains parameterized intractable and inapproximable
(under standard complexity assumptions), even on highly restricted network
topologies~\cite{camille25}.

Most existing optimization approaches focus on the static segment routing
problem, or on variants of it that do not jointly consider multi-period
planning, scheduled link interventions, and bounded waypoint reconfigurations. Bhatia et al.~\cite{bhatia} formulate SR with two
segments as a linear program using a path-based model.
Building on this, Hartert et al.~\cite{hartert2015solving} quantify the
benefit of SR over standard routing protocols, demonstrating congestion
reductions of 30\% to 50\% under known traffic, and introduce a
MILP-based heuristic to reduce computation times.
Schüller et al.~\cite{schuller} further extend this line of work with a
failure-resilient MILP that minimizes the Maximum Link Utilization (MLU),
though at the cost of prohibitive computational times on large instances.
A model supporting an arbitrary number of segments was proposed by Jadin
et al.~\cite{jadin}, who combine column generation with a branch-and-bound
procedure to tackle the problem at scale. Pereira et al.~\cite{pereira} propose a heuristic for
three-segment SR that incorporates link failure handling and makes use of
adjacency segments.
Gay et al.~\cite{gay_srls} introduce SRLS, a local search heuristic aimed
at fast re-optimization in response to sudden traffic changes, addressing
the practical limitation of MILP-based approaches in time-sensitive
scenarios.
Parham et al.~\cite{parham} consider the joint optimization of SR paths
and shortest-path routing weights, proposing a heuristic for this richer
combined problem. These works are the closest algorithmic references for our study.
However, they remain essentially static: the routing configuration is optimized
for a single traffic and network state. They therefore do not cover the multi-period setting with reconfiguration constraints considered here. In contrast, the problem considered in this paper is inherently multi-period:
routing decisions must be planned over a horizon with scheduled interventions,
while limiting the number of waypoint changes between consecutive time steps. Moreover, rather than minimizing the MLU alone, we adopt a lexicographic
objective over the sorted vector of arc loads, as formally defined in~\cref{problem definition}.
This objective strictly strengthens MLU minimization: any lex-optimal solution
is also MLU-optimal, but the converse does not necessarily hold. Beyond this algorithmic gap, we also propose a set of benchmark instances
specifically designed for this problem, including network topologies, traffic
matrices, intervention scenarios, and budget parameters.
These instances are made publicly available to the community, providing a
common basis for testing future approaches on this problem
setting.

Unlike the single-period SR optimization studied by Callebaut et al.~\cite{callebaut},
dominated segment paths; in particular paths containing loops; cannot be
removed from the search space in T-ASR without losing optimality. A path that is dominated in one isolated period may be necessary because it remains close to the previous routing scheme and therefore satisfies the budget (see~\cref{fig:interventionsar3}). This shows that the budget constraint  can force an optimal solution to include segment paths containing loops, which the preprocessing of~\cite{callebaut} would discard.

\begin{figure}[h]
    \centering
\includegraphics[width=0.6\linewidth]{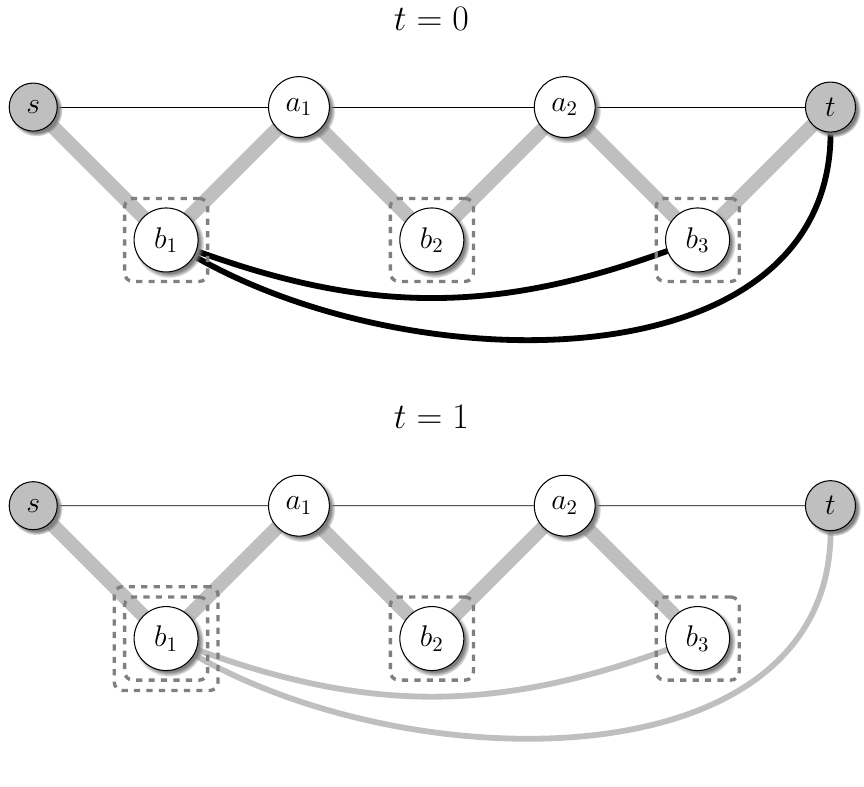}
    \caption{Example of an instance of $\{0,1\}$-\srchallenge where the only optimal solution includes a segment path containing a loop. The input graph is bidirected with each arc~$uv$ having the same capacity and weight as its reverse~$vu$. For this reason, we represent~$uv$ and $vu$ as a single edge in the figure for clarity. The thickness of an edge represents the arcs capacity value. Thin, medium and thick edges correspond to capacity~$1$,~$2$ and~$3$, respectively. All arcs have unit weights except for the arcs $(b_1,t), (t,b_1)$ which have weight~$2$. The only demand to be routed has traffic volume~$1$. The budget is $\kappa(1) = 3$ and the intervention scenario is~$q(1) = \{(b_3,t), (t,b_3)\}$. The ``grayish'' links correspond to the arcs traversed by the flow. An optimal segment paths solution is~$p_1 = \lsr b_1,b_2,b_3 \rsr$ and~$p_2 = \lsr b_1,b_2,b_3,b_1 \rsr$ at time~$0$ and~$1$ with induced MLUs~$\frac{1}{3}$ and~$\frac{1}{2}$, respectively. Notice that~$p_1$ yields the best MLU at time~$0$ since it only uses arcs with maximum capacity and all three waypoints are necessary to achieve this. At time~$t=1$, the given budget allows only the addition or deletion of a single waypoint. Deleting any of~$p_1$'s waypoints or adding no waypoint at all would divert the flow on a thin arc, inducing a MLU of~$1$. The remaining possibility without creating a loop is adding a waypoint on $a_1$ or $a_2$. However, each of these possibilities would again divert the flow to at least one thin arc. Hence, the only remaining option is to add~$b_1$ as a new waypoint to get a MLU of~$\frac{1}{2}$.}
    \label{fig:interventionsar3}
\end{figure}

\section{Definitions} \label{sec:notations}

In simple terms, the problem consists of determining the segment path for each demand in order to balance the network load.
Prior to providing the formal definition of the optimization problem, we introduce the necessary preliminary notations and definitions.


\noindent\textbf{Network} 
A \emph{network} is a tuple $(G=(V,A), \omega, c)$ where $G$ is a directed graph of~$n$ vertices with vertex set~$V = \{v_0, v_1, \ldots, v_{n-1} \}$ and arc set $A \subseteq \{ a_{i,j} : (v_i, v_j) \in V \times V\}$, $\omega: A \rightarrow \mathbb{Q}$ is a weight function used to compute the shortest paths in $G$, and $c\colon A \rightarrow \mathbb{Q}$ is a capacity function that represents, for each arc $a \in A$, its bandwidth $c(a)$ (given in Mbits or Gbits in practice), that is, the maximum amount of traffic throughput that it can accommodate. The terms ``arc'' and ``link'' are used interchangeably throughout this document.

\noindent\textbf{Forwarding graph \& ECMP} \label{forwarding_graph}
Let $(G=(V,A), \omega, c)$ be a network. The \emph{forwarding graph} from a \emph{source} $u \in V$ to a \emph{target} $v \in V$ is the subgraph of $G$ containing all arcs that belong to any shortest path (according to weights $\omega$) from $u$ to $v$. It is denoted $FG(u,v)$. When $FG(u,v)$ is not a simple path, the \emph{Equal-Cost Multi-Path} (ECMP) mechanism is activated: the incoming flow at a vertex of $FG(u,v)$ is evenly divided between the outgoing arcs of this vertex in $FG(u,v)$ (see~\cref{fig:definitions}). 

\noindent\textbf{Segment routing path} 
A \emph{segment routing path}, or \emph{segment path}, in a network $(G=(V,A), \omega, c)$ is a succession of forwarding graphs such that the target of the previous forwarding graph coincides with the source of the next one. A segment path is denoted by $\lsr s, w_1, \ldots, w_\ell, t \rsr$ where $s$ is the source of the segment path, $t$ the target, and $w_1, \ldots, w_\ell \in V$ are the \textit{waypoints} in that order. The source and target do not count as waypoints. The case where there is no waypoint (i.e. $l = 0$) is possible; it means that the demand is simply routed on the shortest paths from~$s$ to~$t$.

\noindent\textbf{Segment} 
In this context, we define a \textit{segment} as a pair of successive nodes in a segment path. These nodes are either the source, target or waypoints included in a segment path. For instance, ($s$, $w_{1}$), ($w_{1}$, $w_{2}$), \ldots, ($w_\ell$,~$t$) are the segments composing the segment path $\lsr s, w_1, \ldots, w_\ell, t \rsr$. Each segment~$(u,v)$ of a segment path is thus associated with the forwarding graph~$FG(u,v)$. A typical maximum number of segments in a segment path is between 4 and 10, depending on the underlying protocol and the router technology.

\noindent\textbf{Demand} 
A \emph{demand} on a network~$(G=(V,A), \omega, c)$ is a pair~$(s,t)$ where the \emph{terminals} $s,t \in V$ are respectively the \emph{source} and \emph{target} of the demand. Each demand is associated with a traffic volume, i.e. the size of the flow that needs to be routed from the source to the target. If the traffic volume is 1, it is referred to as a unit demand. The volume can vary in time, therefore the traffic volume is a function denoted $\nu:  D \times T \to \mathbb{Q}$ with $T$ the discrete set of time periods and $D$ the set of demands, referred to as the traffic matrix.

\noindent\textbf{Routing scheme} 
Given a network $(G=(V,A), \omega, c)$ and a set of $k$ demands $D = \lbrace (s_i, t_i) : i=1, \ldots, m \rbrace$ on $G$, a \emph{routing scheme} for $D$ is a set~$P$ of $k$ segment paths $\lbrace p_d : d \in D \rbrace$ such that $p_d$ is a segment path from~$s$ to~$t$ associated to the demand~$d=(s,t)$.

\noindent\textbf{Time horizon} 
The routing scheme has to be decided in advance and over a given time period (from a few days to a couple of weeks). We assume that the traffic demand values can be estimated through forecasting for each time period, whereas the network state (resource status, utilization and availability) depends on the scheduled maintenance interventions. We denote by $T$ = $\{0, 1, \ldots, h-1\}$ the set of time periods, and~${h \geq 1}$ the routing planning horizon. For example, $T$ = $\{0, 1, \ldots, 6\}$ for a daily planning over a week and $T$ = $\{0, 1, 2, 3\}$ for a weekly planning over a month. The time step $0$ is a special case because it represents the nominal situation, when the network runs without any failure. Therefore, we also denote $T^* = T \setminus \{0\}$.

\noindent\textbf{Interventions} 
An intervention in the network (for maintenance, or other reason) causes a link or a node to be turned down for a certain time. Functionally, it is the same as a failure, except it can be planned in advance. In the following, we consider only links turned down, because if a node is down, it is functionally the same as if all connected links to this node are down. 

\noindent\textbf{Intervention scenario}
An \emph{intervention scenario} is a function~$q: T^* \to 2^A$ that, given a time step~$t \in T^*$, returns the subset~$q(t) \subseteq A$ of arcs in~$G$ that are affected by a scheduled maintenance operation through one or several such \textit{interventions}. We therefore denote by~$G_t$ the subgraph of~$G$ with set~$A \setminus q(t)$ of available arcs, \textit{i.e.} taking into account the intervention scenario that takes down some links. 

\noindent\textbf{Budget}
For a given network and a set of demands, the routing scheme may require some changes or reconfiguration from one time period to the next, in order to take into account the links and nodes that are down. Each change in a routing scheme may require a human action that has a cost. In this context, it is desirable to limit the number of network reconfigurations (i.e. adding/removing waypoints). We denote by~$\dist(P, P') \in \mathbb{N}$ the number of changes between two routing schemes~$P$ and $P'$. More formally, 

$$\dist(P, P') = \sum_{d \in D, i,j \in V, i\neq j} |\delta(p_d,ij) - \delta(p_{d}',ij)|$$
where~$\delta(p_d,ij)$ is equal to 1 if the segment path~$p_d$ associated to the demand~$d$ contains the segment $(i, j)$,~$0$ otherwise (see~\cref{tab:distances}).

\begin{table}[h!]
\centering
\rowcolors{3}{gray!10}{white}
\setlength{\tabcolsep}{5pt}
\renewcommand{\arraystretch}{1.1}
\begin{tabular}{l|l|l}
\rowcolor{gray!25}
\toprule
\textbf{Routing Scheme} $P$ & \textbf{Routing Scheme} $P'$ & $\dist(P, P')$ \\
\midrule
$\{\lsr s, w,  t \rsr\}$ & $\{\lsr s, t \rsr\}$   & $3$ \\
$\{\lsr s, w_1, w_2,  t \rsr\}$ & $\{\lsr s, w_1, t \rsr\}$   & $3$ \\
$\{\lsr s, w_1, w_2,  t \rsr\}$ & $\{\lsr s, w_2, w_1, t \rsr\}$   & $6$ \\
$\{\lsr s, w_1,  t  \rsr\}$ & $\{\lsr s, w_2,  t \rsr\}$   & $4$ \\
$\{\lsr s_1, w_1,  t_1  \rsr, $ & $\{\lsr s_1, w_4,  t_1 \rsr, $   &  \\
\rowcolor{white}
$\lsr s_2, w_2, w_3, t_2 \rsr\}$ & $\lsr s_2, w_3, w_2, t_2 \rsr\}$   & $10~(= 4 + 6)$ \\
\end{tabular}
\vspace{0.1cm}
\caption{Examples of distance values}
\label{tab:distances}
\end{table}

For each time step, a budget function is given, denoted $\kappa : T^* \to \mathbb{N}$, that represents the maximum number of changes allowed from one time step to next one.

\paragraph{Load} 
The load $\lambda(a, t) \in \mathbb{Q}^{+}$ of arc $a \in A$ at time step $t \in T$, is the ratio between the quantity of flow using an arc and its capacity. This ratio is also referred to as \textit{link utilization} and is often used as (part of) the optimization criterion in the literature related to Traffic Engineering. Note that it can be higher than 1 in case of congestion on the link. 
In order to compute the load, we introduce the concept of \textit{split coefficients}, denoted by $r(u, v, a, t)$. They represent the proportion of flow between source node $u$ and target node $v$ that passes through arc $a$ of $FG(u, v)$ under the state of the network at time $t$ (i.e. in graph $G_t$). 
The ratios are given for all couples of nodes $(u, v)$ because the segment $(u, v)$ can be potentially used for routing a demand. 
Note that $r(u, v, a, t)$ is always greater or equal to $0$, and $r(u, v, a, t) = 0$ if arc $a$ is not in the forwarding graph $FG(u, v)$. The load of an arc $a \in A$ at time step $t \in T$ is then:

$$
\lambda(a, t) = \frac{\sum_{d \in D}\sum_{i,j \in V} r(i, j, a, t) \; \nu(d,t) \; \delta(p_d,ij)}{c(a)}
$$
See \cref{fig:definitions} for an illustration of ECMP, split coefficients and of a segment path with one waypoint.

\begin{figure}[h!]
    \centering
    \includegraphics[width=0.8\linewidth]{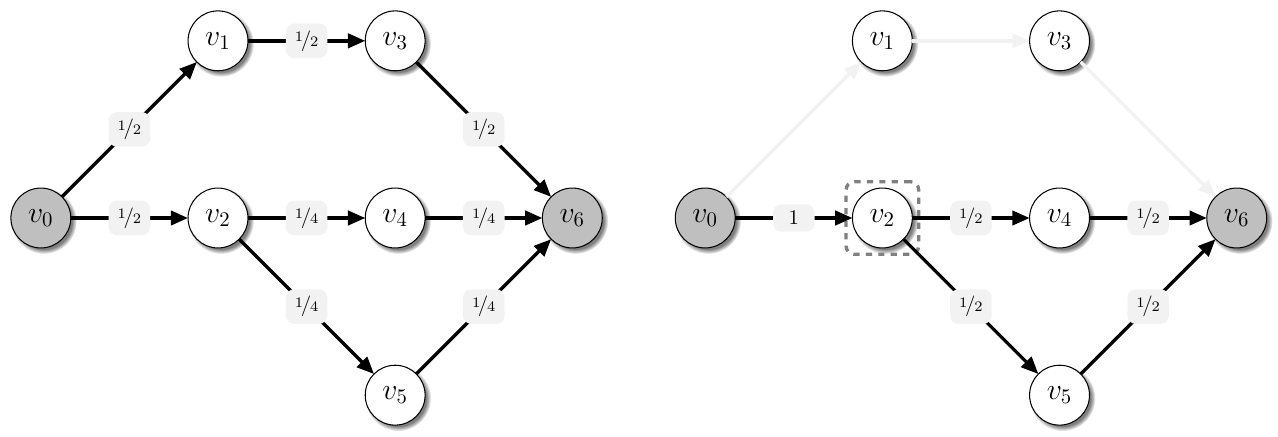}
    \caption{On the left, the demand~$(v_0, v_6)$ is routed on a network with unit weights through the segment path~$\lsr v_0, v_6 \rsr$. The associated forwarding graph, $FG(v_0, v_6)$, includes all arcs. The fraction on each arc indicates how the flow is split among the shortest paths (split coefficient) using the ECMP rule. On the right, a waypoint in~$v_2$ is introduced, resulting in the flow being routed through the segment path~$\lsr v_0, v_2, v_6 \rsr$ that is, through the bottom part of the network. The associated forwarding graphs, $FG(v_0, v_2)$ and $FG(v_2, v_6)$, are represented with bold arcs.}
    \label{fig:definitions}
\end{figure}

\paragraph{Maximum Link Utilization (MLU)} 
Given a network $(G=(V,A), \omega, c)$, a set of $k$ demands $D = \lbrace (s_i, t_i) : i=1, \ldots, k \rbrace$ on $G$, a \emph{routing scheme}~$P$ for $D$, the maximum link utilization, denoted~$\mlu(P,D,G)~\in~\mathbb{Q}^{+}$ is the load of the most loaded link.

\section{Problem Formulation}
\label{sec:prb def}

In this section, we introduce the problem called $T$-\srchallengeLong~($T$-\srchallenge{}). A formal expression of the decision variables, constraints and objective function of $T$-\srchallenge{} is provided

\subsection{Decisions}

\paragraph{Routing}
Let us define the binary variable $x^{dt}_{ij}$ that takes value 1 if the segment path of demand $d\in D$ contains segment ($i$, $j$) at time step $t\in T$ and 0 otherwise. 

\noindent Therefore, a solution of the $T$-\srchallenge{}  problem corresponds to a \textit{routing scheme} induced by the variables $x$ with non-zero values, that minimizes the objective described in~\cref{ss:obj}, and satisfies the constraints given in~\cref{ss:constraints}, for each period of time. 

\subsection{Constraints} \label{ss:constraints}

\paragraph{Flow-conservation constraints} 
The following set of equalities ensures that each traffic demand $d\in D$ is routed along one segment path which connects its origin and its destination, at each time step $t$, \textit{i.e.}, for all~$i \in V$, $d =(s,t) \in D$ and $t\in T$
\begin{align}
\label{ctn:flow} & \sum_{j \in V\setminus \{i\}} x^{dt}_{ij} - \sum_{j \in V\setminus \{i\}} x^{dt}_{ji} =  
\left \{ \begin{array}{ll}
1 & \mbox{if $i = s$,}\\
-1 & \mbox{if $i=t$,}\\
0 & \mbox{otherwise.}
\end{array} \right.
\end{align}

\paragraph{Number of segments} 
The following inequalities allow, for each demand $d$ and each time step $t$, to limit the number of segments, including the first hop from the source node of $d$ to the first waypoint visited, and the last waypoint visited to the destination node, used in the solution. Formally, for all~$d\in D$ and~$t\in T$,
\begin{align}
\label{ctn:segments} & \sum_{i,j \in V} x^{dt}_{ij} \leq \texttt{maxSeg}
\end{align}

\paragraph{Load constraints}

The total traffic of a link is computed by adding the traffic of all demands that are routed through this link.
Formally, for all~$t \in T$ and $a\in A \setminus q(t)$,
\begin{align}
\label{ctn:load} & \sum_{d \in D}\sum_{i,j \in V} r(i, j, a, t) \; \nu(d,t) \; x^{dt}_{ij}  \leq \lambda(a, t) \; c(a)
\end{align}

\paragraph{Budget constraints} 
Since each network configuration change incurs significant operational costs and may also risk deteriorating the Quality of Service, it is desirable to incorporate budget limitations in the model. The following set of inequalities (\ref{ctn:budget_3}) ensures that the value returned by~$\dist$ (defined in~\cref{sec:notations}, paragraph {\it Budget}) is indeed bounded by $\kappa$ and allows to restrict the number of waypoint changes scheduled from one time step to the next.
Formally, for all~$t \in T^*$,
\begin{align}
\label{ctn:budget_3} & \sum_{d\in D}\sum_{i,j\in V,i\neq j}  | x^{dt}_{ij} - x^{d, t-1}_{ij} | \leq \kappa(t)
\end{align}

\subsection{Objective} \label{ss:obj}
A \emph{routing scheme} $P=\{p_d : d\in D\}$ assigns to every demand a
segment path at each time step; equivalently, $P$ is described by the binary
variables $x^{dt}_{ij}$, and we write $P\in\mathcal{F}$ when $P$ satisfies the
constraints~\eqref{ctn:flow}--\eqref{ctn:budget_3}. Let
$\calA_T = \{(a,t): t\in T,\ a\in A\setminus q(t)\}$ be the set of available arc--time pairs (with $q(0)=\varnothing$), and $N=|\calA_T|$. Each routing scheme $P$ induces a load $\lambda(a,t;P)$ on every pair $(a,t)\in\calA_T$, and we denote by
\begin{equation*}
  \label{eq:sorted-vector}
  \lambda^{\downarrow}(P)=\bigl(\lambda^{\downarrow}_{1}(P),\,\ldots,\,
  \lambda^{\downarrow}_{N}(P)\bigr),\qquad
  \lambda^{\downarrow}_{1}(P)\geq\cdots\geq\lambda^{\downarrow}_{N}(P),
\end{equation*}
the vector of these loads sorted in non-increasing order, so that
$\lambda^{\downarrow}_{k}(P)$ is the $k$-th largest load. Crucially, this order
is \emph{not} fixed a priori: the permutation that sorts the loads---and hence
the arc--time pair realizing the $k$-th largest load---depends on the routing
scheme $P$ itself. The objective is to minimize this sorted vector
lexicographically,
\begin{equation}
  \label{ctn:objlex}
  \operatorname*{lex\,min}_{P\in\mathcal{F}}\ \lambda^{\downarrow}(P),
\end{equation}
that is, $\lambda^{\downarrow}_{1}(P)$ is minimized first (the maximum link
utilization), then $\lambda^{\downarrow}_{2}(P)$ among all schemes achieving the
optimal first component, and so on.

\subsection{Problem definition} \label{problem definition}

We are now in position to give the formal definition of the $T$-\srchallenge{} problem formally.

\noindent
\pboptdef{$T$-\srchallengeLong~($T$-\srchallenge{})}
{A network~$(G=(V,A), \omega, c)$, a set of demands~$D = \lbrace (s_i, t_i) : i=1, \ldots, k \rbrace$, a traffic volume $\nu: D \times T \to \mathbb{Q}$, set of time periods~$T = \{0, 1, \ldots, h-1\}$ with horizon~$h\geq1$, an intervention scenario~$q: T^* \to 2^A$, a budget~$\kappa : T^* \to \mathbb{N}$ and a maximum number of segments~$\maxSeg \in \mathbb{N}$ ($\maxSeg \geq 2$).}
{A routing scheme~$P\in\mathcal{F}$ (one segment path per demand and time step satisfying constraints~\eqref{ctn:flow}--\eqref{ctn:budget_3}) that minimizes the objective~\eqref{ctn:objlex}.}

\section{Exact Resolution}
\label{exact}

The lexicographic objective~\eqref{ctn:objlex} cannot be expressed as a single
linear objective over $\mathcal{F}$. It minimizes the sorted load vector, whose ordering depends on the routing solution itself.
A weighted sum of the sorted loads $\sum_{k} w_k\,\lambda^{\downarrow}_k(P)$, also known as Ordered Weighted Averaging (OWA)\cite{OGRYCZAK200380}, may
seem a natural alternative, but it is unsuitable here. Recovering~\eqref{ctn:objlex} requires weights separated enough that no gain on lower ranks
can compensate a loss on a higher one, which forces ratios~$w_k/w_{k+1}$ growing
with the number of ranks and quickly exceeds precision range. 
We therefore manage to decompose the problem into a sequence of MILPs solved iteratively. At each iteration, one rank of the sorted load vector is optimized while the previously optimized ranks are fixed. In what follows, we propose two exact methods that instantiate this generic framework. First, \emph{ALEXA} (described in \cref{subsec:alexa}) explicitly models the complete ordering through an assignment formulation, and second, \emph{STELA}
(\cref{subsec:stela}) that identifies one rank at a time using threshold-exceedance
constraints.
A comparison of both methods is discussed in
\cref{subsec:results-depth}.

\subsection{Overview and common Framework}\label{subsec:exact-overview}

Let $\mathcal F$ denote the set of routing schemes satisfying
Constraints~\eqref{ctn:flow}--\eqref{ctn:budget_3}.
Notice that~$\mathcal F \neq \varnothing$ since the routing scheme that consists in assigning the segment path $\lsr s, t \rsr$ to each demand~$(s,t) \in D$ at every time step trivially satisfies~\eqref{ctn:flow}--\eqref{ctn:budget_3}.

Each routing scheme $P \in \mathcal{F}$ consists of finitely many segment
paths, each a sequence of at most $\texttt{maxSeg}$ pairwise distinct
segments taken from the finite set $\{(i,j) : i,j \in V,\, i \neq j\}$;
hence $\mathcal{F}$ is finite, and it is non-empty since the segment-free
scheme is feasible. A routing scheme $P \in \mathcal{F}$ induces, for every
demand $d \in D$, time step $t \in T$, and distinct nodes $i,j \in V$, a
value $x^{dt}_{ij}(P) \in \{0,1\}$ equal to $1$ when demand $d$ is routed
through segment $(i,j)$ at time $t$ and $0$ otherwise. We write
$x(P) \in \{0,1\}^m$ for the vector collecting these values under a fixed
indexing of the coordinates over all $(d,t,i,j)$. Making the dependence on $P$ explicit through
$x^{dt}_{ij}$, the load $\lambda(a,t)$ of an available arc--time pair
$(a,t)\in\mathcal{A}_T$ introduced in~\cref{sec:notations} reads
\begin{equation}
\lambda(a,t;P)=\frac{1}{c(a)}
\sum_{d\in D}\sum_{i,j\in V}
r(i,j,a,t)\,\nu(d,t)\,x^{dt}_{ij}, \label{eq:load-P}
\end{equation}
and we use $\lambda(a,t)$ and $\lambda(a,t;P)$ interchangeably, keeping the
explicit form wherever the dependence on $P$ matters.


Recall from~\cref{ss:obj} that
$\mathcal{A}_T=\{(a,t):t\in T,\;a\in A\setminus q(t)\}$ (with
$q(0)=\varnothing$) and $N=|\mathcal{A}_T|$.

We address the lexicographic optimization problem~\eqref{ctn:objlex} by decomposing it into a sequence of single-objective optimization problems, each optimizing a specific rank of the sorted load vector, instead of solving it in a single step. We
set $\mathcal{F}_0=\mathcal{F}$ and define  for each iteration $k\ge1$ $(\bar{\lambda}_k,\mathcal{F}_k) $ such that:
\begin{equation}
\small
\bar{\lambda}_k=\min_{P\in\mathcal{F}_{k-1}}\lambda^{\downarrow}_{k}(P)
\label{eq:lex-recursion1}
\end{equation}

\begin{equation}
\small
\mathcal{F}_k=\{P\in\mathcal{F}_{k-1}:\lambda^{\downarrow}_{k}(P)=\bar{\lambda}_k\}.
\label{eq:lex-recursion2}
\end{equation}

This kind of iterative frameworks is well-known in the literature of lexicographic optimization.
In particular, Abernethy et al.~\cite{abernethy2024} analyze the generic
lexicographic \emph{maximization} of an arbitrary set $X\subseteq\mathbb{R}^n$,
where iteration $k$ maximizes the $k$-th smallest component while fixing
the previously optimized ones; their result establishes that the procedure returns exactly the lexicographic optima of $X$.
In this work, we consider the order-reversed (leximin) counterpart that is to say we \emph{minimize} the
$k$-th largest load $\lambda^{\downarrow}_{k}$ instead of maximizing the $k$-th
smallest component, which is equivalent up to negation of the objective function.
Since $\mathcal{F}\subseteq\{0,1\}^m$ is finite and non-empty, a lexicographic
minimizer exists, the minima in~\eqref{eq:lex-recursion1} are attained, and a
scheme $P^\star\in\mathcal{F}$ is lexicographically optimal for
$T$-\srchallenge{} if and only if $P^\star\in\mathcal{F}_N$.

The expression~\eqref{eq:lex-recursion1} defines the values $\bar{\lambda}_k$ but
not how to compute them. We call $\mathcal{M}_k$ a mixed-integer linear formulation corresponding to
step~$k$ of~\eqref{eq:lex-recursion1}, and we say it is \emph{valid} if its optimal
value equals $\bar{\lambda}_k$ (as defined in~\eqref{eq:lex-recursion1}--\eqref{eq:lex-recursion2}) and every
optimal solution yields a scheme in $\mathcal{F}_k$.

Then $\textbf{Solve}(\mathcal{M}_k)$ denotes a
single call to a MILP solver on that formulation, returning the pair
$(\bar{\lambda}_k,P^\star_k)$, where $P^\star_k$ is the  

\begin{algorithm}[ht]
\algrenewcommand\algorithmicindent{0.6em}
\begin{algorithmic}[1] 
\small
\caption{Generic lexicographic minimization}
\label{alg:lex-template}
\Input{$G$, $D$, $T$, $q$, $\kappa$, \maxSeg; a family $\{\mathcal{M}_k\}$}
\Output{Lex-optimal routing $P^\star$ and
$(\bar{\lambda}_1,\ldots,\bar{\lambda}_N)$}

\For{$k\gets1$ To $N$}
    \State $(\bar{\lambda}_k, P^\star_{k}) \leftarrow$\textbf{Solve}($\mathcal{M}_k$) \Comment{Minimize $\lambda_k^{\downarrow}(P)$ over $\mathcal{F}_{k-1}$}


    \If{$\bar{\lambda}_k=0$}
        \State \Return $P_k^\star,\ (\bar{\lambda}_1,\ldots,\bar{\lambda}_k,{0,\ldots,0})$ \Comment{$N-k$ zeros}
    \EndIf
\EndFor

\State  \Return $P_N^\star,\ (\bar{\lambda}_1,\ldots,\bar{\lambda}_N)$\;
\end{algorithmic}
\end{algorithm}

\cref{alg:lex-template} presents the generic sequential lexicographic
procedure that we propose for $T$-\srchallenge{}. At iteration $k$, and given a MILP formulation $\mathcal{M}_k$, it returns the optimal solution $\bar{\lambda}_k=\min_{P\in\mathcal{F}_{k-1}}\lambda^{\downarrow}_{k}(P)$
along with a minimizer. 

\cref{alg:lex-template} instantiates this scheme and
returns a lexicographically optimal routing scheme after at most $N$ MILP
solves~\cite{abernethy2024}. In what follows, we show that the validity of $\mathcal{M}_k$, i.e.,
that it actually optimizes $\lambda^{\downarrow}_{k}$ over $\mathcal{F}_{k-1}$, is
ensured for each formulation according to lemmas of
\cref{subsec:alexa,subsec:stela}. In particular, we instantiate $\mathcal{M}_k$ via two
alternative formulations, denoted $\mathcal{M}_k^{\mathrm{ALEXA}}$ and
$\mathcal{M}_k^{\mathrm{STELA}}$, and detailed hereafter.

\subsection{ALEXA: Assignment-based Lexicographic Algorithm}
\label{subsec:alexa}

ALEXA encodes the full ordering in a single MILP. Let~$\mathcal{K}=\{1,\ldots,N\}$
denote the set of ranks. We introduce a permutation variable
$\pi_{a,t,r}\in\{0,1\}$, equal to $1$ iff the load~$\lambda(a,t;P)$ has rank $r \in \mathcal{K}$,
and rank-load variables~$L_r\in\mathbb{R}_{+}$ giving the $r$-th largest load.
The model $\mathcal{M}_k^{\mathrm{ALEXA}}$, $k\in\mathcal{K}$, can therefore be expressed as follows:

\begin{subequations}
\label{mod:alexa}
\small
\begin{align}
\min_{P\in\mathcal{F},\,\pi,\,L}\ & L_k
  \label{alexa:obj}\\
\text{s.t.}\ & \eqref{ctn:flow}\text{--}\eqref{ctn:budget_3},
  \label{alexa:feas} \\
& \sum_{r\in\mathcal{K}}\pi_{a,t,r}=1,
  \quad \forall (a,t)\in\mathcal{A}_T,
  \label{alexa:row}\\
& \sum_{(a,t)\in\mathcal{A}_T}\pi_{a,t,r}=1,
  \quad \forall r\in\mathcal{K},
  \label{alexa:col}\\
& L_r-\lambda(a,t;P)\le M\,(1-\pi_{a,t,r}),
  \quad \forall (a,t),\,r,
  \label{alexa:link1}\\
& \lambda(a,t;P)-L_r\le M\,(1-\pi_{a,t,r}),
  \quad \forall (a,t),\,r,
  \label{alexa:link2}\\
& L_{r-1}\ge L_{r},
  \quad r=2,\ldots,N,
  \label{alexa:sort}\\
& L_j=\bar{\lambda}_j,
  \quad j=1,\ldots,k-1,
  \label{alexa:freeze}\\
& \pi_{a,t,r}\in\{0,1\},\ L_r\ge 0,
  \quad \forall (a,t),\,r.
  \label{alexa:bin}
\end{align}
\end{subequations}

In this formulation, equalities ~\eqref{alexa:row}--\eqref{alexa:col} allow to express $(\pi_{a,t,r})$ as a 
permutation matrix while \eqref{alexa:link1}--\eqref{alexa:link2} enforce
$L_r=\lambda(a,t;P)$ when $\pi_{a,t,r}=1$. Inequalities \eqref{alexa:sort} order the ranks 
non-increasingly, and \eqref{alexa:freeze} fixes the previously optimized ranks
to the constant values $\bar{\lambda}_j$. This formulation allows a single model to cover every
$k\in\mathcal{K}$. For instance, when $k=1$ the freezing range is empty, the objective is
$L_1=\lambda^{\downarrow}_{1}(P)$ and no constant from a previous level is
needed. Besides, the big-$M$ term in~\eqref{alexa:link1}--\eqref{alexa:link2} switches the two
inequalities on or off. If $\pi_{a,t,r}=1$, the right-hand side is $0$ and both
force $L_r=\lambda(a,t;P)$ while $\pi_{a,t,r}=0$ leads to a right-hand side value equal to $M$ which amounts to relax the constraints \eqref{alexa:link1}--\eqref{alexa:link2}. The latter case requires $M\ge\lambda(a,t;P)$ for all
$(a,t)\in\mathcal{A}_T$ and $P\in\mathcal{F}$, that is to say any smaller value would cut
feasible schemes from $\mathcal{F}$. 
In what follows, show how to derive a valid value for such a constant.

\begin{proposition}
\label{prop:alexa-bigm} 
The constant
\[
  M=\max_{(a,t)\in\mathcal{A}_T}\frac{\textnormal\maxSeg}{c(a)}
  \bigl(\max_{i\neq j} r(i,j,a,t)\bigr)\sum_{d\in D}\nu(d,t)
\]
satisfies $0\le\lambda(a,t;P)\le M$ for all $(a,t)\in\mathcal{A}_T$ and
$P\in\mathcal{F}$.
\end{proposition}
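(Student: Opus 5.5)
Fix an arbitrary $P\in\mathcal{F}$ and an available pair $(a,t)\in\mathcal{A}_T$. The plan is to bound $\lambda(a,t;P)$ from above, term by term, using the explicit formula~\eqref{eq:load-P}, and then take the maximum over $(a,t)$.

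\emph{Nonnegativity.} Every factor in~\eqref{eq:load-P} is nonnegative. The split coefficients satisfy $r(i,j,a,t)\ge 0$ by definition, and $x^{dt}_{ij}\in\{0,1\}$. I will take the traffic volumes $\nu(d,t)$ and capacities $c(a)>0$ to be nonnegative and positive respectively, as implicit in the model, since loads are ratios of flow to bandwidth. Hence $\lambda(a,t;P)\ge 0$.

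\emph{Upper bound.} Split the double sum by demand. For a fixed $d$, write $r^{\max}_{a,t}=\max_{i\neq j} r(i,j,a,t)$. Bound each coefficient by $r^{\max}_{a,t}$; the diagonal terms $i=j$ vanish because $x^{dt}_{ii}$ is never set, since segments are pairs of distinct nodes. This gives
\[
\sum_{i,j} r(i,j,a,t)\,x^{dt}_{ij}\le r^{\max}_{a,t}\sum_{i,j}x^{dt}_{ij}\le r^{\max}_{a,t}\,\texttt{maxSeg},
\]
where the last step is exactly the segment-count constraint~\eqref{ctn:segments}. Multiplying by $\nu(d,t)\ge0$, summing over $d\in D$, and dividing by $c(a)$ yields
\[
\lambda(a,t;P)\le \frac{\texttt{maxSeg}}{c(a)}\,r^{\max}_{a,t}\sum_{d\in D}\nu(d,t).
\]
The right-hand side is one of the terms in the maximum defining $M$, so it is at most $M$.

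\emph{Where care is needed.} The argument is routine, so the only delicate point is making the implicit assumptions explicit. These are $\nu\ge0$, $c(a)>0$, and the fact that the $i=j$ terms contribute nothing. Without $\nu\ge0$, both the multiplication step and nonnegativity could fail. I would state these conventions at the start of the proof.
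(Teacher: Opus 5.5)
Your proof is correct and matches the paper's argument: bound each split coefficient by $\max_{i\neq j} r(i,j,a,t)$, use the segment-count constraint to allow at most \texttt{maxSeg} nonzero terms per demand, then multiply by $\nu(d,t)$, sum over $d\in D$, divide by $c(a)$, and pass to the maximum over $(a,t)\in\mathcal{A}_T$. The conventions you state explicitly are left tacit in the paper's one-line nonnegativity claim, so spelling them out tightens the argument without changing it: $\nu\ge 0$, $c(a)>0$, and no diagonal segments, since $x^{dt}_{ij}$ is only defined for $i\neq j$.
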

\begin{proof}
Non-negativity stems immediately from the load definition~\eqref{eq:load-P}. As for the
upper bound, inequality \eqref{ctn:segments} allows at most $\maxSeg$ non-zero terms per demand in the
inner sum of~\eqref{eq:load-P}, each bounded by
$(\max_{i\neq j}r(i,j,a,t))\,\nu(d,t)$; summing over $d\in D$ and dividing by
$c(a)$ gives $\lambda(a,t;P)\le M$.
\end{proof}
 
\begin{lemma}
\label{lem:alexa-sort}
Fix $P\in\mathcal{F}$ and consider $\mathcal{M}_k^{\mathrm{ALEXA}}$ without the
freezing constraints~\eqref{alexa:freeze}. Let $x$ be the routing variables
induced by $P$.
\begin{enumerate}
  \item[(i)] $P$ extends to a feasible point $(x,\pi,L)$ of the model.
  \item[(ii)] Every feasible point $(x,\pi,L)$ satisfies
    $L_r=\lambda^{\downarrow}_{r}(P)$ for all $r\in\mathcal{K}$; that is, $L$ is
    the non-increasing rearrangement of the loads of $P$.
\end{enumerate}
\end{lemma}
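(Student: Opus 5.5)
For (i), I will build the extension explicitly from a sorting permutation of the loads of $P$. Fix any bijection $\sigma:\mathcal{K}\to\mathcal{A}_T$ with $\lambda(\sigma(1);P)\ge\cdots\ge\lambda(\sigma(N);P)$, breaking ties arbitrarily. Set $\pi_{a,t,r}=1$ iff $(a,t)=\sigma(r)$, and set $L_r=\lambda(\sigma(r);P)=\lambda^{\downarrow}_r(P)$.

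Then I check the constraints in turn.
\begin{itemize}
\item Constraint \eqref{alexa:feas} holds because $P\in\mathcal{F}$.
\item Constraints \eqref{alexa:row}--\eqref{alexa:col} hold because $\sigma$ is a bijection.
\item Constraint \eqref{alexa:sort} holds by the choice of ordering.
\item $L_r\ge0$ holds by non-negativity of the loads.
\item For \eqref{alexa:link1}--\eqref{alexa:link2}: if $\pi_{a,t,r}=1$ both sides vanish. If $\pi_{a,t,r}=0$, I need $|L_r-\lambda(a,t;P)|\le M$. This follows from \cref{prop:alexa-bigm}, since both quantities are loads of $P$ and therefore lie in $[0,M]$.
\end{itemize}

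For (ii), take any feasible $(x,\pi,L)$ with $x=x(P)$. By \eqref{alexa:row}--\eqref{alexa:col} and integrality, $\pi$ is a permutation matrix, so it defines a bijection $\tau:\mathcal{K}\to\mathcal{A}_T$ with $\pi_{\tau(r),r}=1$. Constraints \eqref{alexa:link1}--\eqref{alexa:link2} with $\pi=1$ then force $L_r=\lambda(\tau(r);P)$ for every $r$. Hence the multiset $\{L_1,\ldots,L_N\}$ equals the multiset of loads of $P$.

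By \eqref{alexa:sort}, $L$ is non-increasing. A non-increasing arrangement of a finite multiset is unique, namely its sorted version. Therefore $L_r=\lambda^{\downarrow}_r(P)$ for all $r$.

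The only step needing care is this last one: ties make $\tau$ non-unique, but the multiset argument sidesteps that. If a formal justification is wanted, I would argue that $L_r$ equals the $r$-th largest element, since exactly $r-1$ entries precede it and all of them are at least $L_r$.
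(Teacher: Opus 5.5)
Your proposal is correct and follows essentially the same route as the paper. For (i) you build $\pi$ and $L$ from a sorting bijection and handle the relaxed big-$M$ constraints via \cref{prop:alexa-bigm}; for (ii) you read a bijection off the permutation matrix, use the linking constraints to show $L$ is a rearrangement of the loads of $P$, and conclude from \eqref{alexa:sort}. The differences are cosmetic: you orient the bijection as $\mathcal{K}\to\mathcal{A}_T$, and you spell out \eqref{alexa:feas}, $L_r\ge 0$, and the uniqueness of the sorted rearrangement under ties, which the paper leaves implicit.
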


\begin{proof}
\emph{(i)} Let $\sigma:\mathcal{A}_T\to\mathcal{K}$ be a bijection ordering the
loads non-increasingly, i.e.\ $\lambda(\sigma^{-1}(1);P)\ge\cdots\ge
\lambda(\sigma^{-1}(N);P)$. Set $\pi_{a,t,r}=\mathbf{1}[r=\sigma(a,t)]$ and
$L_r=\lambda^{\downarrow}_{r}(P)$. Then~\eqref{alexa:row}--\eqref{alexa:col} hold
since $\pi$ is a permutation matrix, and~\eqref{alexa:sort} holds by the choice
of $\sigma$. For the linking constraints, when $\pi_{a,t,r}=1$ we have
$r=\sigma(a,t)$, so $L_r=\lambda(a,t;P)$
and~\eqref{alexa:link1}--\eqref{alexa:link2} hold with equality; when
$\pi_{a,t,r}=0$ they hold with slack, since $L_r,\lambda(a,t;P)\in[0,M]$
by~\cref{prop:alexa-bigm}. Hence $(x,\pi,L)$ is feasible.

\emph{(ii)} Let $(x,\pi,L)$ be any feasible point.
By~\eqref{alexa:row}--\eqref{alexa:col} and the integrality~\eqref{alexa:bin},
$\pi$ is a permutation matrix, defining a bijection
$\sigma:\mathcal{A}_T\to\mathcal{K}$ with $\pi_{a,t,\sigma(a,t)}=1$. For each
$(a,t)$, taking $r=\sigma(a,t)$ in~\eqref{alexa:link1}--\eqref{alexa:link2}
forces $L_{\sigma(a,t)}=\lambda(a,t;P)$. Thus $(L_r)_{r\in\mathcal{K}}$ is a
rearrangement of the loads of $P$, and it is non-increasing
by~\eqref{alexa:sort}; therefore $L_r=\lambda^{\downarrow}_{r}(P)$ for all $r$.
\end{proof}

\begin{theorem}
\label{thm:alexa-correctness}

For every $k\in\mathcal{K}$, $\mathcal{M}_k^{\mathrm{ALEXA}}$, namely~\eqref{mod:alexa}
satisfies~\eqref{eq:lex-recursion1}: its optimal value is
$\bar{\lambda}_k=\min_{P\in\mathcal{F}_{k-1}}\lambda^{\downarrow}_{k}(P)$, and
the scheme $P^\star_k$ read from any optimal solution belongs to $\mathcal{F}_k$.

\end{theorem}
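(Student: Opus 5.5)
The plan is to prove the theorem by induction on $k$, using \cref{lem:alexa-sort} to translate feasibility of $\mathcal{M}_k^{\mathrm{ALEXA}}$ into a statement about $\mathcal{F}_{k-1}$. The inductive hypothesis is that $\bar{\lambda}_1,\ldots,\bar{\lambda}_{k-1}$ are exactly the values defined in~\eqref{eq:lex-recursion1}. Under this hypothesis I will prove the identity
\[
  \mathcal{F}_{k-1}=\{P\in\mathcal{F} : \lambda^{\downarrow}_{j}(P)=\bar{\lambda}_j,\ j=1,\ldots,k-1\}.
\]
This follows by unfolding~\eqref{eq:lex-recursion2}: $\mathcal{F}_{j}$ is obtained from $\mathcal{F}_{j-1}$ by imposing $\lambda^{\downarrow}_j(P)=\bar{\lambda}_j$, so after $k-1$ steps one gets the intersection of all these conditions. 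For $k=1$ the set of conditions is empty and $\mathcal{F}_0=\mathcal{F}$.

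Next I characterize the feasible set of the model. I claim that the projection onto the routing variables of the feasible set of $\mathcal{M}_k^{\mathrm{ALEXA}}$ is exactly $\mathcal{F}_{k-1}$, and that on this set the objective $L_k$ equals $\lambda^{\downarrow}_k(P)$.

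The inclusion of the projection in $\mathcal{F}_{k-1}$ goes as follows. Take a feasible $(x,\pi,L)$. Its $x$ defines some $P\in\mathcal{F}$ by~\eqref{alexa:feas}. Dropping~\eqref{alexa:freeze} leaves a feasible point of the relaxed model, so \cref{lem:alexa-sort}(ii) gives $L_r=\lambda^{\downarrow}_r(P)$ for every $r$. Then~\eqref{alexa:freeze} reads $\lambda^{\downarrow}_j(P)=\bar{\lambda}_j$ for $j<k$, so $P\in\mathcal{F}_{k-1}$.

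For the reverse inclusion, take $P\in\mathcal{F}_{k-1}$. \cref{lem:alexa-sort}(i) extends it to a point with $L_r=\lambda^{\downarrow}_r(P)$, which satisfies~\eqref{alexa:freeze} by the identity above.

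In both directions, \cref{lem:alexa-sort}(ii) shows the objective value at any feasible point is $L_k=\lambda^{\downarrow}_k(P)$. Hence the model minimizes $\lambda^{\downarrow}_k$ over a nonempty finite set. Nonemptiness holds because $\mathcal{F}_{k-1}\neq\varnothing$: $\mathcal{F}$ is finite and nonempty, and each minimum in~\eqref{eq:lex-recursion1} is attained. The optimal value is therefore $\min_{P\in\mathcal{F}_{k-1}}\lambda^{\downarrow}_k(P)=\bar{\lambda}_k$, which also closes the induction. Moreover, any optimal solution yields a $P^\star_k\in\mathcal{F}_{k-1}$ with $\lambda^{\downarrow}_k(P^\star_k)=\bar{\lambda}_k$, that is, $P^\star_k\in\mathcal{F}_k$.

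The main subtlety is the induction itself. The constants $\bar{\lambda}_j$ frozen in~\eqref{alexa:freeze} are the outputs of earlier solves, so their agreement with~\eqref{eq:lex-recursion1} must be carried along as the inductive hypothesis rather than assumed. A second point to state explicitly is that \cref{lem:alexa-sort}(ii) applies to points satisfying the freeze constraints, since these form a subset of the relaxed feasible set. Everything else is bookkeeping.
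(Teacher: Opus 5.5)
Your proposal is correct and follows essentially the paper's own argument: \cref{lem:alexa-sort} identifies $L$ with $\lambda^{\downarrow}(P)$, so the freezing constraints~\eqref{alexa:freeze} cut out exactly $\mathcal{F}_{k-1}$, and on that set the objective $L_k$ equals $\lambda^{\downarrow}_k(P)$. The only difference is that you make two points explicit that the paper leaves implicit in its notation: by induction on $k$, that the frozen constants coincide with the recursively defined $\bar{\lambda}_j$, and that $\mathcal{F}_{k-1}\neq\varnothing$.
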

\begin{proof}
By~\cref{lem:alexa-sort}, the feasible set of $\mathcal{M}_k^{\mathrm{ALEXA}}$
without freezing projects onto
$\{(P,\lambda^{\downarrow}(P)):P\in\mathcal{F}\}$. In particular
$L_j=\lambda^{\downarrow}_{j}(P)$ for every $j$, so~\eqref{alexa:freeze} holds if
and only if $\lambda^{\downarrow}_{j}(P)=\bar{\lambda}_j$ for all $j<k$, that is,
if and only if $P\in\mathcal{F}_{k-1}$ by~\eqref{eq:lex-recursion1}. On that
restricted set the objective $L_k$ equals $\lambda^{\downarrow}_{k}(P)$, hence
$\min L_k=\bar{\lambda}_k$. At an optimal solution we therefore have
$P^\star_k\in\mathcal{F}_{k-1}$ and
$\lambda^{\downarrow}_{k}(P^\star_k)=L_k=\bar{\lambda}_k$, which is precisely
$P^\star_k\in\mathcal{F}_k$.
\end{proof}


\subsection{STELA: Sequential Threshold-Exceedance Leximin Algorithm}\label{subsec:stela}

Instead of modeling all the possible permutations, the idea of STELA is to rely on threshold to compute each load rank. In other words, at iteration $k$, it computes the smallest threshold value
exceeded by at most $k-1$ loads, which equals $\lambda^{\downarrow}_{k}(P)$,
while preserving all previously optimized ranks. Now for an optimal routing scheme $P\in\mathcal{F}$ and a threshold value denoted $z\ge 0$, let $E_P(z)=\{(a,t)\in\mathcal{A}_T:\lambda(a,t;P)>z\}$.

\begin{proposition}
\label{prop:stela-threshold}
For every $P\in\mathcal{F}$ and $k\in\mathcal{K}$,
$\lambda^{\downarrow}_{k}(P)=\min\{z\ge 0:\lvert E_P(z)\rvert\le k-1\}$.
\end{proposition}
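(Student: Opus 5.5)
Write $\mu=\lambda^{\downarrow}_{k}(P)$ for brevity. The plan is to show two things: that $z=\mu$ lies in the set $Z=\{z\ge 0:\lvert E_P(z)\rvert\le k-1\}$, and that no $z$ in $Z$ is smaller than $\mu$. Together these show that $Z$ attains its minimum and that the minimum equals $\mu$. The only facts needed are that the loads are nonnegative (\cref{prop:alexa-bigm}) and that $\lambda^{\downarrow}(P)$ is a non-increasing rearrangement of the $N$ loads $\{\lambda(a,t;P)\}_{(a,t)\in\mathcal{A}_T}$.

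\emph{Feasibility of $\mu$.} By \cref{prop:alexa-bigm}, $\mu\ge 0$. Fix a bijection $\sigma:\mathcal{A}_T\to\mathcal{K}$ that sorts the loads non-increasingly, as in the proof of \cref{lem:alexa-sort}. Then the load of the pair ranked $r$ is $\lambda^{\downarrow}_r(P)$. Any pair with load strictly greater than $\mu$ must have rank $r<k$, because every rank $r\ge k$ carries a load $\lambda^{\downarrow}_r(P)\le\mu$. Hence $E_P(\mu)\subseteq\sigma^{-1}(\{1,\dots,k-1\})$, which gives $\lvert E_P(\mu)\rvert\le k-1$ and so $\mu\in Z$.

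\emph{Minimality.} Take any $z$ with $0\le z<\mu$. The pairs of ranks $1,\dots,k$ all have load at least $\lambda^{\downarrow}_k(P)=\mu>z$. These are $k$ distinct pairs, since $\sigma$ is a bijection, so $\lvert E_P(z)\rvert\ge k>k-1$ and $z\notin Z$. Therefore every element of $Z$ is at least $\mu$.

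The main point requiring care is ties. Several arc--time pairs may share the value $\mu$, so the count is easy to get wrong. The strict inequality in the definition of $E_P(z)$ is exactly what makes tied pairs at level $\mu$ fall outside $E_P(\mu)$, and counting ranks through $\sigma$ rather than distinct load values handles multiplicities correctly. A second point is that the statement says ``min'' rather than ``inf''. This is justified because $\mu$ itself belongs to $Z$, so the minimum is attained. The case $k=1$ is covered by the same argument: $E_P(\lambda^{\downarrow}_1(P))=\varnothing$, and every smaller $z$ is exceeded by the top-ranked pair.
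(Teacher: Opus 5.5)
Your proposal is correct and uses the same two-sided argument as the paper: loads exceeding $\lambda^{\downarrow}_{k}(P)$ can occupy only ranks $1,\ldots,k-1$, and any smaller $z\ge 0$ is exceeded by the $k$ largest loads. Counting ties through the sorting bijection and checking $\lambda^{\downarrow}_{k}(P)\ge 0$, so that the minimum is attained, are small refinements of details the paper leaves implicit.
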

\begin{proof}
For a given threshold value $z\ge\lambda^{\downarrow}_{k}(P)$, only ranks $1,\ldots,k-1$ can exceed $z$,
so $\lvert E_P(z)\rvert\le k-1$. Now for $z<\lambda^{\downarrow}_{k}(P)$, the $k$
largest loads exceed $z$, so $\lvert E_P(z)\rvert\ge k$. Hence
$\lambda^{\downarrow}_{k}(P)$ is the smallest such threshold.
\end{proof}

Based on this idea, we give the following MILP formulation for the $T$-\srchallenge{} problem. We introduce, for each threshold level
$j\in\{2,\ldots,N\}$ and each arc--time pair $(a,t)\in\mathcal{A}_T$, a binary
variable $y_{a,t}^{(j)}\in\{0,1\}$ such that:

\[
  y_{a,t}^{(j)} =
  \begin{cases}
    1, & \text{if } \lambda(a,t;P) \text{ exceeds the level-$j$ threshold,}\\
    0, & \text{otherwise.}
  \end{cases}
\]
We introduce the following cardinality constraint $\sum_{(a,t)\in\mathcal{A}_T} y_{a,t}^{(j)} \leq j-1$, where the left-hand side express $\lvert E_P(z)\rvert$, and that allows to associate the level-$j$ threshold to $\lambda^{\downarrow}_{j}(P)$, as stated in~\cref{prop:stela-threshold}. For a
threshold value $z$, the exceedance condition is naturally written
\begin{equation}
\label{bilinear constraint}
  \lambda(a,t;P) \leq z + \bigl(\bar{\lambda}_1 - z\bigr)\,y_{a,t}^{(j)},
\end{equation}
which is linear when $z$ is a constant and represents a fixed level, $z=\bar{\lambda}_j$; but
becomes quadratic, through the product $z\,y_{a,t}^{(j)}$, when $z$ is itself a decision variable, as at the current level.

\paragraph{Level 1}
The first subproblem $\mathcal{M}_1^{\mathrm{STELA}}$ is written as:
\begin{equation}
  \bar{\lambda}_1=\min_{P\in\mathcal{F},\,z\ge0}
  \{z:\lambda(a,t;P)\le z,\ \forall(a,t)\in\mathcal{A}_T\},
  \label{eq:stela-l1}
\end{equation}
whose value $\bar{\lambda}_1$ upper-bounds every load at all later levels.

\paragraph{Level $k\ge 2$}
Given frozen values $\bar{\lambda}_1,\ldots,\bar{\lambda}_{k-1}$, the subproblem
$\mathcal{M}_k^{\mathrm{STELA}}$ is
\begin{subequations}
\small
\label{mod:stela}
\begin{align}
\min_{P\in\mathcal{F},\,z\ge 0,\,y}\ & z
  \label{stela:obj}\\
\text{s.t.}\ & \eqref{ctn:flow}\text{--}\eqref{ctn:budget_3},
  \label{stela:feas}\\
& \lambda(a,t;P)\le \bar{\lambda}_1,\quad \forall (a,t),
  \label{stela:gb}\\
& \lambda(a,t;P)\le \bar{\lambda}_j+(\bar{\lambda}_1-\bar{\lambda}_j)\,y^{(j)}_{a,t},
  \quad \forall (a,t),\ 2\le j<k,
  \label{stela:freeze-t}\\
& \sum_{(a,t)} y^{(j)}_{a,t}\le j-1,\quad 2\le j<k,
  \label{stela:freeze-c}\\
& \lambda(a,t;P)\le z+\bar{\lambda}_1\,y^{(k)}_{a,t},\quad \forall (a,t),
  \label{stela:cur-t}\\
& \sum_{(a,t)} y^{(k)}_{a,t}\le k-1,
  \label{stela:cur-c}\\
& 0\le z\le \bar{\lambda}_{k-1},
  \label{stela:mon}\\
& y^{(j)}_{a,t}\in\{0,1\},\quad \forall (a,t),\ 2\le j\le k.
  \label{stela:bin}
\end{align}
\end{subequations}

Constraints~\eqref{stela:freeze-t}--\eqref{stela:freeze-c} preserve the optimal
value of each preceding level ($\le j-1$ loads exceed $\bar{\lambda}_j$);
\eqref{stela:cur-t}--\eqref{stela:cur-c} encode the current level;
\eqref{stela:mon} enforces monotonicity of $(\bar{\lambda}_k)_k$.

\begin{remark}
\label{rem:linearization}
At a given level $k > $ 2, STELA avoids the quadratic term by replacing~\eqref{bilinear constraint}
with the pair~\eqref{stela:gb} and~\eqref{stela:cur-t},
\[
\lambda(a,t;P)\le \bar{\lambda}_1
\quad\text{and}\quad
\lambda(a,t;P)\le z+\bar{\lambda}_1\,y^{(k)}_{a,t}.
\]
Since $z\le\bar{\lambda}_{k-1}\le\bar{\lambda}_1$ by~\eqref{stela:mon}, these two
constraints together are equivalent to the quadratic term:
\begin{itemize}
  \item if $y^{(k)}_{a,t}=0$, the pair gives $\lambda(a,t;P)\le\min(z,\bar{\lambda}_1)=z$,
    matching the bilinear form evaluated at $y=0$;
  \item if $y^{(k)}_{a,t}=1$, the first gives $\lambda(a,t;P)\le\bar{\lambda}_1$,
    which coincides with the quadratic form evaluated at $y=1$.
\end{itemize}
Since $\bar{\lambda}_1$ is a constant fixed after the first iteration, every
subproblem $\mathcal{M}^{\mathrm{STELA}}_k$, $k>1$, is a standard mixed-integer
linear program.
\end{remark}

\begin{theorem}
\label{thm:stela-correctness}

For every $k\in\mathcal{K}$, $\mathcal{M}_k^{\mathrm{STELA}}$, namely~\eqref{eq:stela-l1} for $k=1$ and~\eqref{mod:stela} for $k\ge2$,
satisfies~\eqref{eq:lex-recursion1}: its optimal value is
$\bar{\lambda}_k=\min_{P\in\mathcal{F}_{k-1}}\lambda^{\downarrow}_{k}(P)$, and
the scheme $P^\star_k$ read from any optimal solution belongs to $\mathcal{F}_k$.
\end{theorem}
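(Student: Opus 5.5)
We argue by induction on $k$, mirroring the proof of \cref{thm:alexa-correctness}. The aim is to show that the projection of the feasible set of $\mathcal{M}_k^{\mathrm{STELA}}$ onto $(P,z)$ is exactly $\{(P,z): P\in\mathcal{F}_{k-1},\ \lambda^{\downarrow}_k(P)\le z\le\bar{\lambda}_{k-1}\}$. Minimizing $z$ over this set then gives $\min_{P\in\mathcal{F}_{k-1}}\lambda^{\downarrow}_k(P)$, and an optimal $P^\star_k$ attains $\lambda^{\downarrow}_k(P^\star_k)=\bar{\lambda}_k$, so $P^\star_k\in\mathcal{F}_k$.

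\textbf{Base case $k=1$.} For fixed $P$, the smallest $z$ satisfying~\eqref{eq:stela-l1} is $\max_{(a,t)}\lambda(a,t;P)=\lambda^{\downarrow}_1(P)$. Minimizing over $\mathcal{F}=\mathcal{F}_0$ therefore gives $\bar{\lambda}_1$, and the optimal scheme lies in $\mathcal{F}_1$.

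\textbf{Threshold characterization.} For $k\ge2$ we first prove a general statement. Let $P$ satisfy~\eqref{stela:gb}. Then, for any threshold $w\in[0,\bar{\lambda}_1]$ and level $j$, there exist binaries $y^{(j)}$ with
\[
\lambda(a,t;P)\le w+(\bar{\lambda}_1-w)\,y^{(j)}_{a,t}
\quad\text{and}\quad
\sum y^{(j)}\le j-1
\]
if and only if $|E_P(w)|\le j-1$. For the forward direction, every $(a,t)$ with $y^{(j)}_{a,t}=0$ has load at most $w$, so $E_P(w)$ is contained in the support of $y^{(j)}$. For the converse, take $y^{(j)}$ to be the indicator of $E_P(w)$; constraint~\eqref{stela:gb} covers the pairs where $y=1$. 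By \cref{prop:stela-threshold}, $|E_P(w)|\le j-1$ is equivalent to $\lambda^{\downarrow}_j(P)\le w$.

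\textbf{Applying it to the frozen levels.} Take $w=\bar{\lambda}_j$ for $j<k$. Constraints~\eqref{stela:freeze-t}--\eqref{stela:freeze-c} hold for some $y$ iff $\lambda^{\downarrow}_j(P)\le\bar{\lambda}_j$ for all $2\le j<k$, and~\eqref{stela:gb} is exactly $\lambda^{\downarrow}_1(P)\le\bar{\lambda}_1$.

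\textbf{Inequalities to equalities.} This is the main obstacle: the constraints only give $\lambda^{\downarrow}_j(P)\le\bar{\lambda}_j$, whereas $\mathcal{F}_{k-1}$ requires equality. I would use the inductive hypothesis, which states $\mathcal{F}_{j}=\{P\in\mathcal{F}_{j-1}:\lambda^{\downarrow}_j(P)=\bar{\lambda}_j\}$ with $\bar{\lambda}_j=\min_{\mathcal{F}_{j-1}}\lambda^{\downarrow}_j$. Proceed by an inner induction on $j$. If $P\in\mathcal{F}_{j-1}$ and $\lambda^{\downarrow}_j(P)\le\bar{\lambda}_j$, then minimality of $\bar{\lambda}_j$ forces $\lambda^{\downarrow}_j(P)=\bar{\lambda}_j$, so $P\in\mathcal{F}_j$. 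Starting from $P\in\mathcal{F}_0$, this yields $P\in\mathcal{F}_{k-1}$. Conversely, every $P\in\mathcal{F}_{k-1}$ satisfies all the frozen constraints with the indicator choice of $y$.

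\textbf{The current level.} By Remark~\ref{rem:linearization} and since $z\le\bar{\lambda}_{k-1}\le\bar{\lambda}_1$, constraints~\eqref{stela:cur-t}, \eqref{stela:gb} and~\eqref{stela:cur-c} are equivalent to the bilinear form with $w=z$. By the threshold characterization above, they hold iff $\lambda^{\downarrow}_k(P)\le z$.

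\textbf{Conclusion.} The bound~\eqref{stela:mon} removes no optimal point. For $P\in\mathcal{F}_{k-1}$ we have $\lambda^{\downarrow}_k(P)\le\lambda^{\downarrow}_{k-1}(P)=\bar{\lambda}_{k-1}$, so the choice $z=\lambda^{\downarrow}_k(P)$ is admissible. The projection is therefore the claimed set. Hence the optimal value is $\bar{\lambda}_k$, and any optimal solution has $P^\star_k\in\mathcal{F}_{k-1}$ with $\lambda^{\downarrow}_k(P^\star_k)\le z=\bar{\lambda}_k$. By minimality this is an equality, so $P^\star_k\in\mathcal{F}_k$.
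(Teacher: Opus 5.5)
Your proof is correct and uses the same ingredients as the paper's proof. These are the base case, the indicator choice of $y^{(j)}$ to exhibit feasibility, \cref{prop:stela-threshold} to turn the threshold and cardinality constraints into $\lambda^{\downarrow}_j(P)\le\bar{\lambda}_j$, and the inner induction on $j$ that uses the minimality of $\bar{\lambda}_j$ to upgrade these inequalities to equalities. The only difference is packaging: you characterize the whole projection of the feasible set onto $(P,z)$, in the style of the ALEXA proof, whereas the paper argues an upper bound from one explicit feasible point and a lower bound from an arbitrary optimal solution.
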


\begin{proof}
By induction on $k$. For $k=1$, the constraints of~\eqref{eq:stela-l1} force
$z\ge\max_{(a,t)}\lambda(a,t;P)=\lambda^{\downarrow}_{1}(P)$, so~\eqref{eq:stela-l1}
minimizes $\lambda^{\downarrow}_{1}$ over $\mathcal{F}_0=\mathcal{F}$: its value
is $\bar{\lambda}_1$ and any optimal $P^\star_1$ attains it, i.e.\
$P^\star_1\in\mathcal{F}_1$. Let now $k\ge2$ and assume the claim for all $j<k$;
write $v_k$ for the optimal value of $\mathcal{M}_k^{\mathrm{STELA}}$.

\emph{Upper bound: $v_k\le\bar{\lambda}_k$.} Fix a scheme
$P\in\mathcal{F}_{k-1}$ attaining $\bar{\lambda}_k$, and build a candidate
solution by setting $z=\bar{\lambda}_k$,
$y^{(j)}_{a,t}=\mathbf{1}\bigl[\lambda(a,t;P)>\bar{\lambda}_j\bigr]$
$(2\le j<k)$ and $y^{(k)}_{a,t}=\mathbf{1}\bigl[\lambda(a,t;P)>z\bigr]$.
Since $P\in\mathcal{F}_{k-1}$, we have $\lambda^{\downarrow}_{1}(P)=\bar{\lambda}_1$,
so every load is at most $\bar{\lambda}_1$ and the global bound~\eqref{stela:gb}
holds. For each $j$ with $2\le j<k$, again $\lambda^{\downarrow}_{j}(P)=\bar{\lambda}_j$,
so by~\cref{prop:stela-threshold} at most $j-1$ loads exceed $\bar{\lambda}_j$;
with the above choice of $y^{(j)}$ this gives both the threshold
constraint~\eqref{stela:freeze-t} and the cardinality
constraint~\eqref{stela:freeze-c}. Likewise
$z=\bar{\lambda}_k=\lambda^{\downarrow}_{k}(P)$ is exceeded by at most $k-1$
loads, yielding~\eqref{stela:cur-t}--\eqref{stela:cur-c}; and
$z=\bar{\lambda}_k\le\bar{\lambda}_{k-1}$ gives~\eqref{stela:mon}. This feasible
point has objective value $\bar{\lambda}_k$, hence $v_k\le\bar{\lambda}_k$.

\emph{Lower bound: $v_k\ge\bar{\lambda}_k$.} Let $(P_k^\star,z^\star,y)$ be an
optimal solution. We first show $P_k^\star\in\mathcal{F}_{k-1}$ by an inner
induction on $j<k$. For $j=1$, the global bound~\eqref{stela:gb} gives
$\lambda^{\downarrow}_{1}(P_k^\star)\le\bar{\lambda}_1$, and since
$P_k^\star\in\mathcal{F}$ the minimality of $\bar{\lambda}_1$ forces
$\lambda^{\downarrow}_{1}(P_k^\star)=\bar{\lambda}_1$.
For $2\le j<k$, the threshold constraints~\eqref{stela:freeze-t} and the
cardinality constraints~\eqref{stela:freeze-c} imply that at most $j-1$ loads
exceed $\bar{\lambda}_j$, so $\lambda^{\downarrow}_{j}(P_k^\star)\le\bar{\lambda}_j$
by~\cref{prop:stela-threshold}; combined with $P_k^\star\in\mathcal{F}_{j-1}$
(inner hypothesis) and the minimality of $\bar{\lambda}_j$, this gives
$\lambda^{\downarrow}_{j}(P_k^\star)=\bar{\lambda}_j$. Hence
$P_k^\star\in\mathcal{F}_{k-1}$. Finally, the current-level
constraints~\eqref{stela:cur-t}--\eqref{stela:cur-c}
and~\cref{prop:stela-threshold} give
$\lambda^{\downarrow}_{k}(P_k^\star)\le z^\star$, so
\[
  \bar{\lambda}_k
  =\min_{P\in\mathcal{F}_{k-1}}\lambda^{\downarrow}_{k}(P)
  \le\lambda^{\downarrow}_{k}(P_k^\star)\le z^\star=v_k.
\]

Combining both bounds, we obtain $v_k=\bar{\lambda}_k$ which completes the proof and yields
$\lambda^{\downarrow}_{k}(P_k^\star)=\bar{\lambda}_k$ and
$P_k^\star\in\mathcal{F}_k$.
\end{proof}

By~\cref{thm:alexa-correctness,thm:stela-correctness}, both ALEXA and STELA formulations
meet~\eqref{eq:lex-recursion1} and return a lexicographically optimal
routing scheme when driven by~\cref{alg:lex-template}.

\section{Numerical Experiments}
\label{sec:experiments}

\subsection{Benchmark instance generation procedure}
\label{datageneration}
We developed benchmark instances for the $T$-ASR problem. Each instance
combines a network topology, a time-dependent traffic matrix, a set of intervention
scenarios, and operational parameters. The traffic matrices are constructed and then selected based on a difficulty indicator, which is derived from the gap between the lower bound given by the multi-commodity flow problem and an upper bound for the segment-routing obtained thanks to a greedy algorithm. The reconfiguration budget is set at a level lower than the cost of performing independent optimizations for each period. This approach is chosen to preserve the temporal coupling (\textit{i.e. } the interdependence between different time periods). All the instance files are provided in the reference hereafter~\cite{roadef2026_instances}-~\cite{stela_alexa_code}. 

\subsection{Experimental setup}
\label{subsec:setup}

In this section, we compare ALEXA and STELA on the so-called \texttt{setAP}~\cite{stela_alexa_code}, one of the
benchmark families developed for the $T$-ASR problem. This set contains $125$
instances grouped by $N\in\{100,200,300,400,500\}$. 
This set of instance was generated with a number of nodes varying from 10 to 50, by increments of 10. The number of arcs is five times the number of nodes. For each number of nodes, the number of demands varies from 10 to 50, by increments of 10.
Both algorithms follow the framework described in \cref{alg:lex-template} and
differ only in the formulation $\mathcal{M}_k$ solved at each step, so the comparison
isolates the effect of the encoding. We set a CPU time limit of 20 minutes for both approaches and all the instances tested. When the time limit is reached, the run is stopped and we record
the largest rank $k$ reached, together with the values
$\bar{\lambda}_1,\ldots,\bar{\lambda}_k$. Due to the sequential nature of the
lexicographic procedure, reaching rank $k$ means that the first $k$ ranks have
been solved to optimality. Since both approaches solve one lexicographic level
through one MILP subproblem, $k$ measures the depth reached within the
computational time limit. \cref{alg:lex-template} along with both formulations is implemented in \texttt{Python~3.10.11} using \texttt{networktools}~\cite{networktools} library and \texttt{Gurobi~13.0.2} as a MILP solver, under deterministic settings on an \texttt{Intel Xeon~2.80\,GHz} machine with 32\,GB RAM and \texttt {Ubuntu~24.04}.
The complete implementation of both algorithms are publicly available~\cite{stela_alexa_code}.


\subsection{Numerical tolerances}
\cref{alg:lex-template} assumes exact MILP optimization and exact equality
between successive levels.
In the computational
experiments, these assumptions are relaxed: each MILP subproblem is solved with
a relative MIP gap $\gamma$, and previously optimized levels are enforced within
a numerical tolerance $\varepsilon$. We use $\gamma=10^{-8}$ and
$\varepsilon=10^{-9}$ for both algorithms. This precision is particularly relevant for lexicographic
optimization, since the subproblems are solved sequentially: the tolerance used
at one level affects the feasible region considered at subsequent levels and may
lead to different approximate outcomes at later ranks. The stability of
approximate lexicographic optimization has been studied in~\cite{abernethy2024}.

\subsection{Ranks solved within the time limit}
\label{subsec:results-depth}
\begin{figure}[h!]
    \centering
    \includegraphics[width=0.6\linewidth]{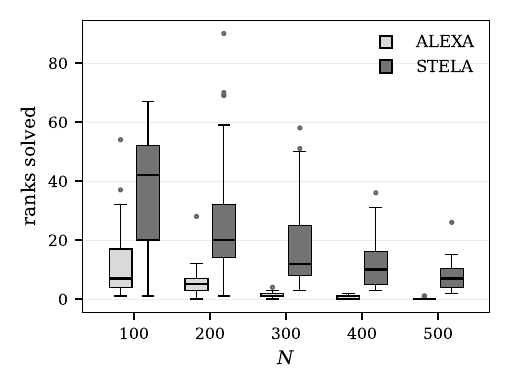}
    \caption{\small Number of solved ranks of the sorted load vector within the
20-minute time limit, for each instance class. Boxes show the main range of
the data, and individual points represent outliers.}
    \label{fig:boxplot_stela_vs_alexa}
\end{figure}

\cref{fig:boxplot_stela_vs_alexa} shows the number of lexicographic ranks
solved by each algorithm within the time limit. Observe that STELA performs better
than ALEXA on almost all instances. In particular, it solves more ranks on $109$ out of
$115$ instances, reaches the same depth on $5$ instances, and solves fewer
ranks on only one instance.

This behaviour follows from the structure of the per-rank formulations
$\mathcal{M}_k^{\mathrm{ALEXA}}$ and $\mathcal{M}_k^{\mathrm{STELA}}$. Indeed, fixing a lexicographic
level in ALEXA constrains the continuous load variables $L_j$ but leaves the
$N\times N$ permutation matrix unconstrained, thereby providing greater flexibility in the solutions. 
Conversly, although both formulations have quadratic worst-case size at the final ranks, STELA contains $N(k-1)$ threshold-exceedance binary variables and is substantially smaller for early and intermediate ranks $k\ll N$. Thus,
the additional computational effort per level increases more slowly for the latter algorithm.

\section{Concluding remarks}
\label{sec:conclusion}
In this paper, we introduced $T$-\srchallenge{} problem, a multi-period segment-routing problem where
the sorted link load vector is minimized lexicographically under specific routing protocol and budget constraints. We proposed a decomposition of the problem into a sequence of MILP subproblems solving each rank, together with two different formulations for the rank subproblem: ALEXA and STELA.

Experiments on the \texttt{setAP} benchmark show that STELA performs better than
ALEXA on almost all the tested instances and within the fixed CPU time limit. These results make STELA a more
promising approach to tackle larger instances. Future work will investigate more
scalable solution methods, such as column generation and branch-and-price,
while preserving the lexicographic framework.


\bibliographystyle{unsrt}
\bibliography{main}

\end{document}